\newcommand{\Sym}{\ensuremath \mathrm{Sym}}
\newcommand{\lcm}{\ensuremath \mathop \mathrm{lcm} \nolimits}
\begin{document}

\title{Multiple Factorizations of
Bivariate Linear Partial Differential Operators}
\titlerunning{Multiple Factorizations of
Bivariate Linear Partial Differential Operators}

\author{Ekaterina Shemyakova}
\authorrunning{E.Shemyakova}
\institute{Research Institute for Symbolic Computation (RISC),\\
J.Kepler University,\\
Altenbergerstr. 69, A-4040 Linz, Austria\\
\email{kath@risc.uni-linz.ac.at},\\ WWW home page:
\texttt{http://www.risc.uni-linz.ac.at} }

\maketitle

\begin{abstract}
We study the case when a bivariate Linear Partial Differential
Operator (LPDO) of orders three or four has several different
factorizations.

We prove that a third-order bivariate LPDO has a first-order left
and right factors such that their symbols are co-prime if and only
if the operator has a factorization into three factors, the left one
of which is exactly the initial left factor and the right one is
exactly the initial right factor. We show that the condition that
the symbols of the initial left and right factors are co-prime is
essential, and that the analogous statement ``as it is'' is not true
for LPDOs of order four.

Then we consider completely reducible LPDOs, which are defined as an
intersection of principal ideals. Such operators may also be
required to have several different factorizations. Considering all
possible cases, we ruled out some of them from the consideration due
to the first result of the paper. The explicit formulae for the
sufficient conditions for the complete reducibility of an LPDO were
found also.
\end{abstract}

\section{Introduction}

The factorization of Linear Partial Differential Operators (LPDOs)
is an essential part of recent algorithms for the exact solution for
Linear Partial Differential Equations (LPDEs). Examples of such
algorithms include numerous generalizations and modifications of the
18th-century Laplace Transformations
Method~\cite{2ndorderparab,ts:genLaplace05,ts:steklov_etc:00,anderson_juras97,anderson_kamran97,athorne1995,Zh-St,St08},
the Loewy decomposition method~\cite{gs,gs:genloewy05,gs:08}, and
others.

The problem of constructing a general factorization algorithm for an
LPDO is still an open problem, though several important
contributions have been made over the last decades (see for
example~\cite{gs,LiSchTs:03,LiSchTs02,ts:enumerLODO:96,ts:genLaplace05,obstacle2}).
The main difficulty in the case of LPDOs is non-uniqueness of
factorization: (irreducible) factors and the number of factors are
not necessarily the same for two different factorizations of the
same operator. For example, for the famous Landau
operator~\cite{blumberg} $L$ we have $ L = (D_x + 1 +
\frac{1}{x+c(y)} ) \circ (D_x + 1 -
  \frac{1}{x+c(y)} ) \circ (D_x + x D_y) =(D_{xx} +x D_{xy} +D_x +(2+x)D_y )\circ (D_x + 1) \ .$
Note that the second order factor in the second factorization is
hyperbolic and is irreducible.

However, for some classes of LPDOs factorization is unique. For
example, there is~\cite{gs} no more than one factorization that
extends a factorization of the principal symbol of the operator into
co-prime factors (see Theorem~\ref{thm:gs}).

Some important methods of exact integration, for example, mentioned
above Loewy decomposition methods require LPDOs to have a number of
different factorizations of certain types. Also completely reducible
LPDOs introduced in~\cite{gs}, which becomes significant as the
solution space of a completely reducible LPDO coincides with the sum
of those of its irreducible right factors may require a number of
right factors. Thus, in Sec.~\ref{sec:main} we study the case when a
bivariate (not necessarily hyperbolic) LPDO has two different
factorizations. For operators of order three we have a really
interesting result (Theorems~\ref{thm:main_3} and
\ref{thm:improved1}). We showed that analogous statement for
operators of order four is not true.

For the proof of the theorems we use invariants' methods. Invariants
of LPDOs under the gauge transformations (see Sec.~\ref{sec:def})
are widely used for factorization problems since Laplace' times as
many properties appearing in connection with the factorization of an
LPDO are invariant under the gauge transformations, and, therefore,
can be expressed in terms of generating invariants, which were found
in~\cite{movingframes}. Factorization itself is invariant under the
gauge transformations: if for some LPDO $L$, $L= L_1 \circ L_2$,
then $L^g = L_1^g \circ L_2^g$. Expressions for necessary and
sufficient conditions for the existence of a factorization of a
given LPDO of a given factorization type were found
in~\cite{inv_cond_hyper_case} and ~\cite{inv_cond_non_hyper_case}.
We use these expressions in the proofs of the theorems of
Sec.~\ref{sec:completely_reducible}.

Theorems~\ref{thm:main_3} and \ref{thm:improved1} of
Section~\ref{sec:main} allow us to reduce consideration of cases in
Sec.~\ref{sec:completely_reducible}, where we show how the problem
of the complete reducibility of a hyperbolic bivariate LPDO can be
expressed in terms of invariants also.

\section{Definitions and Notations}
\label{sec:def}

Consider a field $K$ of characteristic zero with commuting
derivations $\partial_x, \partial_y$, and the ring of linear
differential operators $K[D]=K[D_x, D_y]$, where $D_x, D_y$
correspond to the derivations $\partial_x,
\partial_y$, respectively. In $K[D]$ the variables $D_x, D_y$ commute
with each other, but not with elements of $K$. For $a\in K$ we have
$D_i a = aD_i + \partial_i(a)$. Any operator $L \in K[D]$ has the
form $L = \sum_{i+j =0}^d a_{ij} D_x^i D_y^j$, where $a_{ij} \in K$.
The polynomial $\Sym(L) =  \sum_{i+j = d} a_{ij} X^i Y^j$ in formal
variables $X, Y$ is called the (principal) \emph{symbol} of $L$.

Below we assume that the field $K$ is differentially closed unless
stated otherwise, that is it contains solutions of (non-linear in
the generic case) differential equations with coefficients from $K$.

Let $K^*$ denote the set of invertible elements in $K$. For $L \in
K[D]$ and every $g \in K^*$ consider the gauge transformation $L
\rightarrow L^g = g^{-1} \circ L \circ g$. Then an algebraic
differential expression $I$ in the coefficients of $L$ is
\emph{invariant} under the gauge transformations (we consider only
these in the present paper) if it is unaltered by these
transformations. Trivial examples of invariants are the coefficients
of the symbol of an operator. A generating set of invariants is a
set using which all possible differential invariants can be
expressed.

Given a third-order bivariate LPDO $L$ and a factorization of its
symbol  $\Sym(L)$ into first-order factors. In some system of
coordinates the operator has one of the following normalized forms:
\begin{eqnarray} \label{L1}
L &=& (p(x,y) D_x + q(x,y) D_y) D_x D_y  + \sum_{i+j=0}^2
a_{ij}(x,y) D_x^iD_y^j \ ,
\\
\label{op:XXY} L &=& D^2_x D_y  + \sum_{i+j=0}^2 a_{ij}(x,y)
D^i_{x}D_{y}^j \ , \\
\label{op:XXX} L &=& D^3_x + \sum_{i+j=0}^2 a_{ij}(x,y)
D^i_{x}D_{y}^j \ ,
\end{eqnarray}
where $p=p(x,y) \neq 0$, $q=q(x,y) \neq 0$, $a_{ij}=a_{ij}(x,y)$.
The normalized form~(\ref{L1}) has symbol $S=pX+qY$. Without loss of
generality one can assume $p=1$. Each of the class of operators
admits gauge transformations $L \rightarrow g^{-1} \circ L \circ g$
for $g=g(x,y) \neq 0$.

\begin{remark} Recall that since for two LPDOs $L_1, L_2 \in K[D]$ we have
$\Sym(L_1 \circ L_2) = \Sym(L_1) \cdot \Sym(L_2)$, any factorization
of an LPDO extends some factorization of its symbol. In general, if
$L \in K[D]$ and $\Sym(L)=S_1 \cdot \dots \cdot S_k$, then we say
that the factorization
\[
L=F_1 \circ \dots  \circ F_k, \quad \quad \Sym(F_i0=S_i, \ \forall i
\in \{1, \dots, k\},
\]
is of the {\it factorization type} $(S_1)\dots(S_k)$.
\end{remark}
We reformulate the famous result of~\cite{gs} in the new notation:
\begin{theorem}~\cite{gs} \label{thm:gs} Let LPDO $L$ of arbitrary order and in arbitrary
number of independent variables have symbol $\Sym(L)=S_1 \dots S_k$,
where $S_i$-s are pairwise co-prime. Then there exists at most one
factorization of $L$ of the type $(S_1)\dots(S_k)$.
\end{theorem}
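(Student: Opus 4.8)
The plan is to prove this uniqueness statement by induction on $k$, the base case $k=2$ being the genuinely substantive one; the co-primality hypothesis will be used only through unique factorization in the commutative polynomial ring $K[X,Y,\dots]$ of symbols. Throughout I would use two standard structural facts about $K[D]$: it is an integral domain (an iterated Ore extension of a field), and the symbol map is multiplicative, $\Sym(P \circ Q) = \Sym(P)\,\Sym(Q)$, so that orders add under composition and no cancellation of top-order terms can occur in a product.

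For $k=2$, suppose $L = F_1 \circ F_2 = G_1 \circ G_2$ with $\Sym(F_i) = \Sym(G_i) = S_i$ and $\gcd(S_1, S_2) = 1$. Put $A = F_1 - G_1$ and $B = G_2 - F_2$. Since $F_1$ and $G_1$ have the same symbol $S_1$, we have $\ord A < \deg S_1 = \ord F_1$, and likewise $\ord B < \ord F_2$. Subtracting $G_1 \circ G_2$ from $F_1 \circ F_2$ and regrouping yields the operator identity $A \circ F_2 = G_1 \circ B$. If $A \neq 0$, then $B \neq 0$ as well (as $K[D]$ is a domain), and comparing symbols on both sides gives the polynomial identity $\Sym(A)\,S_2 = S_1\,\Sym(B)$. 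Because $S_1$ is co-prime to $S_2$ and $K[X,Y,\dots]$ is a UFD, $S_1$ divides $\Sym(A)$; but $\deg \Sym(A) = \ord A < \deg S_1$, forcing $\Sym(A) = 0$, a contradiction. Hence $A = 0$, i.e.\ $F_1 = G_1$, and then $G_1 \circ B = A \circ F_2 = 0$ with $G_1 \neq 0$ forces $B = 0$, i.e.\ $F_2 = G_2$.

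For the inductive step I would regroup two factorizations $L = F_1 \circ \dots \circ F_k = G_1 \circ \dots \circ G_k$ of type $(S_1)\dots(S_k)$ as $F_1 \circ (F_2 \circ \dots \circ F_k) = G_1 \circ (G_2 \circ \dots \circ G_k)$; the two inner operators have symbol $S_2 \cdots S_k$, which is co-prime to $S_1$ since $S_1$ is co-prime to each $S_j$ and $K[X,Y,\dots]$ is a UFD. The already-established two-factor case gives $F_1 = G_1$ and $F_2 \circ \dots \circ F_k = G_2 \circ \dots \circ G_k$, and the latter is an identity between two factorizations of type $(S_2)\dots(S_k)$ with pairwise co-prime symbols, so the induction hypothesis applies; the case $k=1$ is trivial. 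The only point needing care is the bookkeeping of orders — that in $A \circ F_2 = G_1 \circ B$ the top-order parts genuinely do not cancel, so that $\deg \Sym(A) = \ord A$ and the symbol identity above is legitimate — but this is precisely what the domain property together with multiplicativity of the symbol map guarantees, and there is otherwise no real obstacle: co-primality and unique factorization in $K[X,Y,\dots]$ do the work.
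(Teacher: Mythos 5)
Your proof is correct: the telescoping identity $A \circ F_2 = G_1 \circ B$, the order bound $\ord A < \deg S_1$, and the divisibility argument in the UFD $K[X,Y,\dots]$ together give exactly the standard uniqueness argument, and the inductive regrouping is sound because $\gcd(S_1, S_2\cdots S_k)=1$ follows from pairwise co-primality. The paper itself gives no proof of this statement (it is imported from Grigoriev--Schwarz \cite{gs}), and your argument is essentially the one found there, so there is nothing further to compare.
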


\section{Factorization via Invariants for Hyperbolic  Bivariate
Operators of Order Three}
\label{sec_a_full_system_of_inv}

Information from this section will be used in the proofs below in
the case, where $L$ is hyperbolic operator.

\begin{theorem} \cite{invariants_gen} \label{invariant_general}
The following $7$ invariants form a generating set of invariants for
operators of the form (\ref{L1}): $q$, $I_1
=2q^2a_{20}-qa_{11}+2a_{02}$, $I_2= -qa_{02y}+a_{02}q_y+q^2a_{20x}$,
$I_3=a_{10}+2q_ya_{20}+a_{20}^2q-a_{11y}+qa_{20y}-a_{11}a_{20}$,
$I_4=a_{01}q^2-3q_xa_{02}+a_{02}^2-a_{11x}q^2+a_{11}qq_x+qa_{02x}
-a_{02}a_{11}q$, $I_5
=a_{00}q+2a_{02}a_{20x}-a_{02}a_{10}-a_{01}a_{20}q
-\frac{1}{2}a_{11xy}q+qq_xa_{20y}-a_{11}qa_{20x}+qq_ya_{20x}+2q^2a_{20}a_{20x}+qq_{xy}a_{20}
+a_{20}a_{11}a_{02}$.
\end{theorem}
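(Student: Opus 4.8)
The plan is to work directly with the gauge action on the coefficients, following the moving-frame philosophy of \cite{movingframes}. Write $g=e^{\phi}$ with $\phi\in K$; then $g^{-1}\circ D_x\circ g=D_x+\phi_x$ and $g^{-1}\circ D_y\circ g=D_y+\phi_y$, so $L^{g}$ is obtained from $L$ by the substitution $D_x\mapsto D_x+\phi_x$, $D_y\mapsto D_y+\phi_y$ followed by re-collecting terms by total order. First I would carry out this substitution once for the normalized form (\ref{L1}) with $p=1$ and read off the transformation law of each coefficient. The third-order part is unchanged, so in particular $q^{g}=q$; the second-order coefficients satisfy affine laws $a_{20}^{g}=a_{20}+\phi_y$, $a_{02}^{g}=a_{02}+q\phi_x$, $a_{11}^{g}=a_{11}+2\phi_x+2q\phi_y$; and $a_{10}^{g},a_{01}^{g},a_{00}^{g}$ satisfy laws affine in $\phi_x,\phi_y$ and their derivatives, with coefficients that are differential polynomials in $q$ and the $a_{ij}$.

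Second, with these laws in hand, checking that $q,I_1,\dots,I_5$ are invariant is a finite computation: substitute and verify the $\phi$-terms cancel. For $q$ this is immediate, and for $I_1$ it follows at once from $q\,a_{11}^{g}-2a_{02}^{g}-2q^{2}a_{20}^{g}=q\,a_{11}-2a_{02}-2q^{2}a_{20}$. For $I_2,\dots,I_5$ one additionally uses the first- and zeroth-order laws and their $D_x,D_y$-prolongations; equivalently, one prolongs the infinitesimal generator $\delta L=[L,\phi]$ to the jet space of the coefficients and checks that each listed expression lies in its kernel. This step is mechanical once the laws of the first step are recorded.

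The substantive part is \emph{completeness} --- that every differential invariant is a function of $q,I_1,\dots,I_5$ and their total derivatives. Here I would use gauge fixing. Since $K$ is differentially closed, the first-order linear equation $2\phi_x+2q\phi_y=-a_{11}$ is solvable, so one may pass to a frame in which $a_{11}=0$; the residual freedom then solves $\psi_x+q\psi_y=0$ and has to be tracked. In such a frame the remaining coefficients $a_{20},a_{02},a_{10},a_{01},a_{00}$ (after clearing powers of $q$) become --- modulo the residual freedom --- invariant, and a direct computation identifies the invariant combinations they produce with $I_1,I_2,I_3,I_4,I_5$; since any gauge invariant is a function of the coefficients in any fixed frame, it is thus a function of $q,I_1,\dots,I_5$. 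I expect the main obstacle to be precisely this last step, because a single normalization does not exhaust the ``one-function'' gauge group: one must either iterate the normalization (pinning a further coefficient down along a characteristic of $\partial_x+q\partial_y$) or run the equivalent Lie-theoretic codimension count --- computing the dimension of a generic orbit of the prolonged action on the order-$n$ jet of the coefficients and verifying that $q,I_1,\dots,I_5$ together with their derivatives up to the relevant order are functionally independent and attain that codimension, so that a standard completeness criterion for differential invariants applies. The degenerate configurations ($q\equiv 0$, or where the normalizing equation fails to have a solution in the chosen frame) are ruled out by the genericity and differential-closedness hypotheses and can be treated separately.
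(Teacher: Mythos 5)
The paper does not actually prove this statement: Theorem~\ref{invariant_general} is imported verbatim from \cite{invariants_gen} and used as a black box, so there is no internal proof to measure your attempt against. Judged on its own terms, your first two steps are correct and essentially forced: the transformation laws you record ($q^{g}=q$, $a_{20}^{g}=a_{20}+\phi_y$, $a_{02}^{g}=a_{02}+q\phi_x$, $a_{11}^{g}=a_{11}+2\phi_x+2q\phi_y$, with affine laws for the lower-order coefficients) do follow from the substitution $D_x\mapsto D_x+\phi_x$, $D_y\mapsto D_y+\phi_y$ applied to the normalized form~(\ref{L1}), and the cancellation check for $I_1,\dots,I_5$ (e.g.\ $2q^{2}\phi_y-q(2\phi_x+2q\phi_y)+2q\phi_x=0$ for $I_1$) is a finite, mechanical computation.

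The gap is in the completeness step, and you have located it yourself but not closed it. After the single normalization $a_{11}=0$ the residual gauge freedom is still infinite-dimensional --- all $\psi$ with $\psi_x+q\psi_y=0$, i.e.\ functions constant along the characteristics of $\partial_x+q\partial_y$ --- and the remaining coefficients are \emph{not} invariant under it; the claim that they ``become, modulo the residual freedom, invariant'' is precisely what has to be proved, not a consequence of the normalization. Neither of the two escape routes you name is executed: the iterated normalization along a characteristic is not carried out, and the orbit-codimension argument requires an actual rank computation (prolonging the action to a sufficiently high jet order and verifying that $q,I_1,\dots,I_5$ together with their total derivatives are functionally independent and exhaust the codimension of a generic orbit). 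Without one of these, the proposal is a credible plan in the spirit of the moving-frames construction of \cite{movingframes}, but not a proof of the generation claim; to complete it you must either exhibit the fully normalized coefficients as explicit functions of $q,I_1,\dots,I_5$, or supply the independence and dimension count.
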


The set of values of these seven invariants uniquely defines an
equivalent class of operators of the form~(\ref{L1}). Also invariant
properties of such operators can be described in terms of the seven
invariants.
\begin{lemma} \label{fact_is_invariant}
The property of having a factorization
(or a factorization extending a certain factorization of the symbol)
is invariant.
\end{lemma}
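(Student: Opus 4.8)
The plan is to unwind the definitions and use the fact that the gauge transformation is a ring automorphism of $K[D]$ (or at least a multiplicative structure that respects composition). First I would recall that for $g \in K^*$ the map $\Phi_g : L \mapsto L^g = g^{-1} \circ L \circ g$ satisfies $(L_1 \circ L_2)^g = g^{-1} \circ L_1 \circ L_2 \circ g = (g^{-1} \circ L_1 \circ g) \circ (g^{-1} \circ L_2 \circ g) = L_1^g \circ L_2^g$, which is exactly the observation already made in the introduction. Moreover $\Phi_g$ is a bijection on operators of each fixed order, with inverse $\Phi_{g^{-1}}$, and it does not change the symbol, since conjugation by a zeroth-order factor only affects lower-order terms: $\Sym(L^g) = \Sym(L)$.

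Next I would argue the two halves of the claim. Suppose $L$ has a factorization $L = F_1 \circ \dots \circ F_k$. Then $L^g = F_1^g \circ \dots \circ F_k^g$ is a factorization of $L^g$ with factors of the same orders, so the mere property ``$L$ factors (into $k$ factors of prescribed orders)'' is preserved under every gauge transformation, hence invariant. For the refined statement about extending a fixed factorization $\Sym(L) = S_1 \cdots S_k$ of the symbol: if $L = F_1 \circ \dots \circ F_k$ with $\Sym(F_i) = S_i$, then because $\Sym(F_i^g) = \Sym(F_i) = S_i$, the factorization $L^g = F_1^g \circ \dots \circ F_k^g$ extends the \emph{same} symbol factorization $S_1 \cdots S_k$. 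Conversely, applying $\Phi_{g^{-1}}$ shows that if $L^g$ has such a factorization then so does $L$; thus the property is genuinely unchanged, i.e. invariant. Since this holds for all $g \in K^*$, the property ``$L$ admits a factorization of type $(S_1)\dots(S_k)$'' is a gauge invariant of $L$.

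The only point requiring a little care — and the place I would expect a referee to want detail — is the assertion $\Sym(L^g) = \Sym(L)$, together with the claim that $\Phi_g$ sends operators of order $d$ to operators of order $d$. This follows from $D_i \circ g = g D_i + \partial_i(g)$ (so $g^{-1} \circ D_i \circ g = D_i + g^{-1}\partial_i(g)$) and, more generally, from the Leibniz-rule expansion of $g^{-1} \circ D_x^i D_y^j \circ g$: the top-order term is unchanged and all correction terms have strictly lower order. Everything else is formal manipulation with the composition product, so this is the substantive step; once it is in place, the lemma is immediate. I would also note that the argument is uniform across the normalized forms \eqref{L1}, \eqref{op:XXY}, \eqref{op:XXX}, since each admits the same family of gauge transformations $g = g(x,y) \neq 0$, so no case distinction is needed.
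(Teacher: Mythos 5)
Your proposal is correct and follows essentially the same route as the paper: the key identity $g^{-1}\circ F_1\circ\dots\circ F_k\circ g=(g^{-1}\circ F_1\circ g)\circ\dots\circ(g^{-1}\circ F_k\circ g)$ is exactly the paper's one-line proof. The extra details you supply (that $\Sym(F_i^g)=\Sym(F_i)$, which handles the parenthetical claim about extending a fixed symbol factorization, and the converse via $\Phi_{g^{-1}}$) are left implicit in the paper but are correct and worth stating.
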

\begin{proof} Let $L=F_1 \circ F_2 \circ \dots \circ F_k$, for some operators
$F_i \in K[D]$.
 For every $g \in K^*$ we have
$g^{-1} \circ L \circ g = \left( g^{-1} \circ F_1 \circ g \right)
\circ  \left( g^{-1} \circ F_2 \circ g \right) \circ \dots \circ
\left( g^{-1} \circ F_k \circ g \right)$.
\end{proof}

Looking through the formulaes of the next theorem, notice that some
conditions are the same for different types of factorizations. In
particular, one can pay attention to conditions $(A_1)-(D_1)$. Such
correlations will be used in the next section (Sec.~\ref{sec:main}).

\begin{theorem}~\cite{inv_cond_hyper_case} \label{thm:non-constant}
Given the values of the invariants $q, I_1,I_2,I_3,I_4,I_5$ (from
Theorem~\ref{invariant_general}) for an equivalence class of
operators of the form~(\ref{L1}). The LPDOs of the class have a
factorization of factorization type
\begin{description}

  \item[$(S)(XY)$]  if and only if
\begin{equation} \label{conds:(X+qY)(XY)}
\left.
\begin{array}{ll}
& I_3q^3-I_{1y}q^2+q_yI_1q-I_4+qI_{1x}-2q_xI_1-3qI2 = 0 , \\
& -q^2I_{4y}+1/2q^3I_{1xy}-qI_{4x}-3/2q^2q_xI_{1y}+q^3I_5+
q^2I_{1xx} \\
&-3/2I_1q^2q_{xy}-  2I_1qq_{xx}+5I_1qq_xq_y+
6I_1q_x^2+3I_4q_x\\
&+3I_4qq_y-qI_1I_{1x}+I_1I_4+2q_xI_1^2 -4I_{1x}qq_x-3/2I_{1x}q^2q_y
\\
&-2q^2I_{2x}-q^3I_{2y}+I_2qI_1+
 4I_2qq_x+2I_2q^2q_y = 0 \ ;
\end{array}
\right\}
\end{equation}

\item[$(S)(X)(Y)$] if and only if
$ \quad (\ref{conds:(X+qY)(XY)}) \quad \& \quad
-I_4+qI_{1x}-2q_xI_1-qI_2= 0$ \ ;

\item[$(S)(Y)(X)$] if and only if
$\quad (\ref{conds:(X+qY)(XY)}) \quad \& \quad (C_1)$ \ ;

\item[$(X)(SY)$] if and only if
\begin{equation} \label{conds:(X)(qSY)}
\left.
\begin{array}{ll}
& (D_1): \quad qq_{xx}-I_4-2q_x =0 \ , \\
& -3/2q_xqI_{1y}-q^3I_{3x}+I_5q^2+1/2q^2I_{1xy}-1/2qq_yI_{1x}+
\\
& q_xq^2I_3+2I_1q_xq_y-1/2I_1q_{xy}q -4q_xI_2+qI_{2x} = 0 \ .
\end{array}
\right\}
\end{equation}

\item[$(X)(S)(Y)$] if and only if
$ \quad (\ref{conds:(X)(qSY)}) \quad \& \quad (B_1)$;
\item[$(X)(Y)(S)$] if and only if
$ \quad (\ref{conds:(X)(qSY)}) \quad \& \quad (D_1)$;

\item[$(XY)(S)$] if and only if
\begin{equation} \label{conds:(XY)(qS)}
\left.
\begin{array}{ll}
& -qI_2+qq_xq_y+q_{yy}q^3-q^2q_{xy}+qq_{xx}+I_3q^3-I_4-2q_x^2
= 0 \ , \\
& q^3I_5+qI_{4x}+1/2q^3I_{1xy}-3/2q^2q_xI_{1y}+I_1I_4+q^2I_{2x}+
 2I_1qq_xq_y \\
&+2I_1q_x^2-5I_4q_x-1/2I_1q^2q_{xy}-I_1qq_{xx}+
 I_4qq_y-1/2I_{1x}q^2q_y -4I_2qq_x\\
& -10q_x^3-q^2q_{xxx}-q^4I_{3x}
 +I_3q^3q_x+2qq_x^2q_y-q^2q_yq_{xx}+8qq_xq_{xx}= 0 \ .
\end{array}
\right\}
\end{equation}

\item[$(YS)(X)$] if and only if
\begin{equation} \label{conds:(YqS)(X)}
\left.
\begin{array}{ll}
& (C_1): \quad -2q_xI_1+qI_{1x}-I_4-2qI_2 = 0 \ , \\
& -qI_{4y}+1/2q^2I_{1xy}+I_5q^2-I_2I_1-q^2I_{2y}+2q_yI_4+3I_1q_xq_y-
\\
& 3/2I_1q_{xy}q-1/2qq_yI_{1x}-3/2q_xqI_{1y} = 0 \ .
\end{array}
\right\}
\end{equation}

\item[$(XS)(Y)$] if and only if
\begin{equation} \label{conds:(XqS)(Y)}
\left.
\begin{array}{ll}
& (B_1): \quad I_3q^2-qI_{1y}+q_yI_1-2I_2 = 0 \\
& -1/2qq_yI_{1x}-3/2q_xqI_{1y}+I_5q^2-q^3I_{3x}+1/2q^2I_{1xy}+
q_xq^2I_3\\
& -2q_xI_2 +qI_{2x} +3I_1q_xq_y-3/2I_1q_{xy}q-I_2I_1+
2q_y^2q_xq-2q_yqI_2 \\
& -2q_{xy}q^2q_y+2q_{xy}qq_x-2q_yq_x^2 \ .
\end{array}
\right\}
\end{equation}

\item[$(Y)(SX)$] if and only if
\begin{equation} \label{conds:(Y)(XqS)}
\left.
\begin{array}{ll}
& (A_1): \quad I_3+q_{yy}= 0 \ , \\
& \quad
-qI_{4y}-3/2q_xqI_{1y}+I_5q^2+1/2q^2I_{1xy}+2q_yI_4-1/2qq_yI_{1x}+
\\
& 3I_1q_xq_y-3/2I_1q_{xy}q-q^2I_{2y} = 0 \ ;
\end{array}
\right\}
\end{equation}

\item[$(Y)(X)(S)$] if and only if
$ \quad (\ref{conds:(XY)(qS)}) \quad \& \quad
-qq_{xx}+I_4+2q_x^2+qI_2-qq_xq_y+q^2q_{xy} = 0\ ; $

\item[$(Y)(S)(X)$] if and only if
$ \quad (\ref{conds:(YqS)(X)}) \quad \& \quad (A_1) \ ; $

\end{description}
\end{theorem}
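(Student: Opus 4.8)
The plan is to reduce this three-factor case to a combination of two already-established two-factor cases plus one extra scalar condition, exactly in the spirit of the remark preceding the theorem (the recurring conditions $(A_1)$–$(D_1)$). A factorization of type $(Y)(S)(X)$ means $L = F_1 \circ F_2 \circ F_3$ with $\Sym(F_1)=Y$, $\Sym(F_2)=S=X+qY$ (after normalizing $p=1$), and $\Sym(F_3)=X$. The key observation is that grouping the first two factors gives $L = (F_1 \circ F_2)\circ F_3$, i.e.\ $L$ has a factorization of type $(YS)(X)$, whose invariant criterion is precisely~(\ref{conds:(YqS)(X)}); and grouping the last two gives $L = F_1 \circ (F_2 \circ F_3)$, i.e.\ a factorization of type $(Y)(SX)$, whose criterion is~(\ref{conds:(Y)(XqS)}), the first line of which is $(A_1): I_3+q_{yy}=0$. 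So the necessity of ``$(\ref{conds:(YqS)(X)})$ \& $(A_1)$'' is almost immediate: if $L$ factors as $(Y)(S)(X)$ then both groupings exist, hence both criteria hold, and in particular~(\ref{conds:(YqS)(X)}) and $(A_1)$ hold.

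For \emph{sufficiency} one has to show the converse: assuming~(\ref{conds:(YqS)(X)}) together with $(A_1)$, produce the triple factorization. First, by Theorem~\ref{thm:non-constant} applied to the $(YS)(X)$ type, condition~(\ref{conds:(YqS)(X)}) gives a factorization $L = G \circ F_3$ with $\Sym(G) = Y\cdot S$ and $\Sym(F_3)=X$. The remaining task is to factor the second-order operator $G$ (with symbol $Y S = Y(X+qY)$, whose two first-order factors $Y$ and $S$ are \emph{co-prime} since $q\neq 0$) as $G = F_1\circ F_2$ with $\Sym(F_1)=Y$, $\Sym(F_2)=S$. Because the symbols are co-prime, Theorem~\ref{thm:gs} tells us such a factorization, if it exists, is unique; its existence is governed by the invariant condition for $G$. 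One then has to verify that the hypotheses~(\ref{conds:(YqS)(X)}) and $(A_1)$, \emph{expressed back in terms of the invariants $q,I_1,\dots,I_5$ of the original $L$}, are exactly what is needed to guarantee that $G$ admits the $(Y)(S)$ split. Here one uses that the invariants of $G$ can be written in terms of those of $L$ and the (known) coefficients of the extracted right factor $F_3$, together with Lemma~\ref{fact_is_invariant} to pass freely between gauge-equivalent representatives.

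The main obstacle will be this last bookkeeping step: tracking how the invariants transform when one strips off the right factor $F_3$, and checking that the second-order factorization condition for $G$ collapses exactly to the single extra scalar equation $(A_1)$ once~(\ref{conds:(YqS)(X)}) is already assumed. Concretely, one normalizes $L$ to the form~(\ref{L1}), writes $F_3 = D_x + \alpha$, $F_2 = X+qY + (\text{lower order})$, $F_1 = D_y + \gamma$ with undetermined coefficients, expands $F_1\circ F_2\circ F_3$, and matches coefficients with~(\ref{L1}); the overdetermined linear-then-nonlinear system that results must be shown consistent precisely under the stated hypotheses. Since $K$ is differentially closed, any auxiliary Riccati-type equations that appear for the coefficients of the factors are solvable, so the only real content is the compatibility of the algebraic constraints — and that compatibility is what the invariant identities~(\ref{conds:(YqS)(X)}) and $(A_1)$ encode. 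An alternative, cleaner route to sufficiency is to invoke the $(Y)(SX)$ criterion instead: condition $(A_1)$ plus the second line of~(\ref{conds:(Y)(XqS)}) would give $L = F_1\circ H$ with $\Sym(H)=SX$ and the co-prime factors $S,X$; one then shows that the \emph{remaining} parts of~(\ref{conds:(YqS)(X)}) force $H$ to split as $F_2\circ F_3$. Either way the endgame is the same finite elimination, and I expect checking that elimination to be the only genuinely laborious part of the argument.
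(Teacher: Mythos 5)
Theorem~\ref{thm:non-constant} is quoted in this paper from the earlier work \cite{inv_cond_hyper_case}; the paper itself contains no proof of it, so there is no internal argument to compare yours against. Judged on its own terms, your proposal has a genuine gap: it is a plan for one of the twelve listed factorization types (the $(Y)(S)(X)$ case only), and even for that case the substantive half is not carried out. The necessity direction you give is fine but essentially contentless --- regrouping $F_1\circ F_2\circ F_3$ as $(F_1\circ F_2)\circ F_3$ and as $F_1\circ(F_2\circ F_3)$ immediately yields the $(YS)(X)$ and $(Y)(SX)$ criteria. The entire content of the theorem lies in sufficiency and, more fundamentally, in producing the explicit polynomial expressions in $q, I_1,\dots,I_5$ in the first place: one must expand an ansatz for the factors against the normalized form~(\ref{L1}), eliminate the undetermined lower-order coefficients, and rewrite the resulting compatibility conditions through the generating invariants of Theorem~\ref{invariant_general}. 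You explicitly defer exactly this step as ``laborious bookkeeping,'' but that elimination \emph{is} the theorem; without it nothing is proved, and none of the displayed conditions is derived.

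There is also a more specific logical gap in the one case you do treat. Your necessity argument shows that a $(Y)(S)(X)$ factorization forces \emph{both} lines of~(\ref{conds:(Y)(XqS)}) in addition to~(\ref{conds:(YqS)(X)}), whereas the theorem asserts that~(\ref{conds:(YqS)(X)}) together with only the first line $(A_1)$ already characterizes the type. For the stated equivalence you must therefore prove that the second line of~(\ref{conds:(Y)(XqS)}) (equivalently, the single scalar condition for your second-order operator $G$ to split as $(Y)(S)$) is a consequence of~(\ref{conds:(YqS)(X)}) and $(A_1)$. That is a nontrivial differential-polynomial identity among the invariants, and it is precisely the verification your plan postpones. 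Your appeal to Theorem~\ref{thm:gs} for the uniqueness of $G$ is correct and does make the reduction to a second-order factorization problem well posed, but the reduction only relocates the computation; it does not discharge it.
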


For LPDOs of the forms~(\ref{op:XXY}) and~(\ref{op:XXX}) generating
sets of invariants and the corresponding conditions of the existence
of factorizations of different types are know
also~\cite{movingframes}, \cite{inv_cond_non_hyper_case}.

\section{Several Factorizations of One Operator}
\label{sec:main}
\begin{theorem} \label{thm:main_3} Let $\gcd(S_1,S_2)=1$.
A third-order bivariate operator $L$ has a first-order left factor
of the symbol $S_1$ and a first-order right factor of the symbol
$S_2$ if and only if it has a complete factorization of the type
$(S_1)(T)(S_2)$, where $T=\Sym(L)/(S_1 S_2)$.
\end{theorem}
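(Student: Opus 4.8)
The plan is to prove the nontrivial direction: assuming $L = A \circ P$ with $\Sym(A)=S_1$, $\ord A = 1$, and also $L = Q \circ B$ with $\Sym(B)=S_2$, $\ord B = 1$, and $\gcd(S_1,S_2)=1$, we must produce a three-term factorization $(S_1)(T)(S_2)$. The converse is immediate, since if $L = F_1 \circ F_2 \circ F_3$ with symbols $S_1,T,S_2$, then $F_1$ is a left factor of symbol $S_1$ and $F_3$ is a right factor of symbol $S_2$.

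First I would set up the two given factorizations. From $L = A\circ P$, with $\ord P = 2$ and $\Sym(P) = S_2 T$, the coprimality $\gcd(S_2,T)=1$ (which follows from $\gcd(S_1,S_2)=1$ together with $S_1 S_2 T = \Sym(L)$, provided $S_1,S_2,T$ are the three symbol factors — one should check $\gcd(S_2,T)$ need not be $1$ in general, so a small case analysis on whether $T$ is a multiple of $S_2$ may be needed; in the generic co-prime case Theorem~\ref{thm:gs} applies directly). Similarly $L = Q\circ B$ has $\ord Q = 2$, $\Sym(Q)= S_1 T$. The strategy is to show $P$ itself factors as $P = R \circ B'$ where $B'$ is a first-order right factor of symbol $S_2$, and that $B'$ must coincide with the given right factor $B$; then $L = A \circ R \circ B$ is the desired $(S_1)(T)(S_2)$ factorization with $R$ of symbol $T$.

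The key step is an application of Theorem~\ref{thm:gs} (uniqueness of factorization extending a co-prime symbol factorization). Since $B$ is a right factor of $L$ of symbol $S_2$ and $\gcd(\Sym(A\circ P)/S_2, S_2) = \gcd(S_1 T, S_2)$, I would first argue $\gcd(S_1 T, S_2) = 1$: indeed $\gcd(S_1,S_2)=1$ by hypothesis, and if $\gcd(T,S_2)\neq 1$ then (the symbols being linear forms) $S_2 \mid T$, which since $T$ is itself linear forces $T = cS_2$ and then $\Sym(L)$ has the repeated factor $S_2$; this degenerate subcase I would handle separately, or note that the statement's interesting content is precisely the co-prime situation. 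Granting co-primality, Theorem~\ref{thm:gs} says the factorization of $L$ of type $(S_1 T)(S_2)$ is unique, so $L = Q\circ B$ is \emph{the} one, and moreover $L = A\circ P$ must route through it: concretely, $P$ has $B$ as a right factor because $Q\circ B = A\circ P$ with $\Sym(A)=S_1$ co-prime to $S_2=\Sym(B)$, so dividing $L$ on the right by $B$ is unambiguous and compatible with the left factor $A$. Formally: $L = A\circ P$, $B$ is a right divisor of $L$ of symbol $S_2$ co-prime to $\Sym(A)$; by the uniqueness argument the right division of $L$ by $B$ gives a well-defined quotient $Q$ with $Q = A\circ R$ for some $R$ of symbol $T$ — this is the standard fact that a right factor co-prime to the symbol of an existing left factor "passes through" it. Then $L = A\circ R\circ B$.

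The main obstacle I anticipate is making the "passing through" step rigorous rather than folklore: one must show that from $A\circ P = Q\circ B$ with $\gcd(\Sym A, \Sym B)=1$ one can conclude $B$ right-divides $P$. The clean way is to invoke Theorem~\ref{thm:gs} in the form: $L$ has symbol $S_1\cdot(T S_2)$ with the two bracketed parts co-prime, hence at most one factorization of type $(S_1)(TS_2)$; but $L = A\circ P$ is such a factorization, so it is the unique one; independently, $L = Q\circ B$ exhibits $B$ as a right factor, and one can extend it to $(S_1)(T)(S_2)$ by factoring $P$ — the existence of a right factor of $P$ of symbol $S_2$ follows because the Euclidean-type right division of $P$ by an operator of symbol $S_2$ has a remainder of lower order which must vanish, as otherwise $L$ would have two distinct type-$(S_1)(TS_2)$ factorizations, contradicting uniqueness. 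Once $P = R\circ B$ is established, checking $\Sym(R) = T$ is automatic from symbols, and we are done. The remaining care is the degenerate case where $S_1, S_2, T$ are not pairwise co-prime (only $\gcd(S_1,S_2)=1$ is assumed); there $T$ equals $S_1$ or $S_2$ up to scalar, and I would dispatch these two finite subcases by a direct argument on second-order operators, or observe they reduce to the co-prime analysis applied to the appropriate pair.
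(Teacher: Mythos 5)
Your reduction of the problem to the claim that the right factor ``passes through'' the left one --- i.e.\ that $A\circ P=Q\circ B$ with $\ord A=\ord B=1$ and $\gcd(\Sym(A),\Sym(B))=1$ forces $B$ to right-divide $P$ --- is a genuinely different route from the paper, which instead writes down the invariant necessary-and-sufficient conditions of Theorem~\ref{thm:non-constant} for each factorization type and verifies case by case that the conditions for a left factor of symbol $S_1$ together with those for a right factor of symbol $S_2$ imply the conditions for type $(S_1)(T)(S_2)$ (and handles the normalized form~(\ref{op:XXY}) by direct computation). The difficulty is that your justification of the crucial step is a non sequitur. First, $K[D_x,D_y]$ admits no Euclidean right division: what you can do is choose any first-order $R$ with $\Sym(R)=T$ and set $W=P-R\circ B$, which has order at most $\ord P-1$, \emph{not} order less than $\ord B$. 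Second, and more seriously, if $W\neq 0$ this does not produce a second factorization of $L$ of type $(S_1)(TS_2)$ --- it merely says $B$ is not a right factor of $P$ --- so no contradiction with the uniqueness of Theorem~\ref{thm:gs} arises. The step you yourself identify as the main obstacle is therefore not closed. To close it one must argue directly on the identity $A\circ W=(Q-A\circ R)\circ B$: compare orders and leading symbols, use $\gcd(S_1,S_2)=1$ to conclude $S_2\mid\Sym(W)$, absorb the corresponding term into $R$, and iterate down to $\ord W=0$, where $w\,S_1=m\,S_2$ forces $w=0$. That argument is elementary but it is not the one you wrote, and since the paper asserts (Proposition after Theorem~\ref{thm:improved1}) that the analogous statement fails for fourth-order operators, any completed version must be checked carefully for where the hypothesis $\ord L=3$ enters.

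A second, smaller gap: your repeated appeals to Theorem~\ref{thm:gs} need the \emph{pairwise} coprimality of $S_1$, $T$, $S_2$, but the hypothesis only gives $\gcd(S_1,S_2)=1$. In the normalized form~(\ref{op:XXY}) (symbol $X^2Y$) one has $T=S_1$ or $T=S_2$ up to a constant, so the types $(S_1)(TS_2)$ and $(S_1T)(S_2)$ are not coprime splittings and Theorem~\ref{thm:gs} gives no uniqueness there. You defer this to ``a direct argument on second-order operators''; that is in fact exactly what the paper does (showing, e.g., that $H_1\circ H_2=G_2\circ G_1$ with $\Sym(H_1)=X$, $\Sym(G_1)=Y$ forces $H_2=(D_x+b)\circ(D_y+a)$), so this part needs to be carried out rather than waved at. As written, the proposal is a plausible plan with its central lemma unproven.
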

The following diagram is an informal illustration of the statement
of the theorem:
\[
\left( (S_1)(\dots) \quad \wedge \quad (\dots)(S_2)  \right) \iff
(S_1)(\dots)(S_2)
\]

\begin{proof} The part of the statement ``$\Longleftarrow$''
is trivial. Prove ``$\Longrightarrow$''.

The symbol of the operator has two different factors, therefore, the
normalized form of the operator $L$ is either~(\ref{L1})
or~(\ref{op:XXY}). Without loss of generality we can consider $L$ in
its normalized form.

Consider the first (hyperbolic) case. For this class of operators we
have the generating system of invariants $q, I_1, I_2, I_3, I_4,
I_5$ from Theorem~\ref{invariant_general}. The symbol is the same
for all the operators in the class and is $X \cdot Y \cdot S$, where
$S=pX+qY$. The following six cases are the only possibilities for
the $S_1$ and $S_2$, $\gcd(S_1,S_2)=1$.

Case $S_1=S$, $S_2=Y$. By the Theorem~\ref{thm:non-constant}
operator $L$ has a left factor of the symbol $S_1=S$ and a right
factor of the symbol $S_2=Y$ if and only if conditions
(\ref{conds:(X+qY)(XY)}) and (\ref{conds:(XqS)(Y)}) are satisfied.
From the second equality in~(\ref{conds:(XqS)(Y)}) derive an
expression for $I_3$ and substitute it for $I_3$ into the first
equality in~(\ref{conds:(X+qY)(XY)}). The resulting equality implies
that the third condition (in Theorem~\ref{thm:non-constant}) for the
existence of factorization of the type $(S)(X)(Y)$ is satisfied. The
remaining two first conditions are exactly the same as two
conditions~(\ref{conds:(X+qY)(XY)}), and therefore the theorem is
proved for this case.

Case $S_1=Y$, $S_2=S$. Operator $L$ has a left factor of the symbol
$S_1=Y$ and a right factor of the symbol $S_2=S$ if and only if
conditions (\ref{conds:(Y)(XqS)}) and (\ref{conds:(XY)(qS)}) are
satisfied. From the first equality in~(\ref{conds:(Y)(XqS)}) derive
an expression for $I_3$ ($I_3=-q_{yy}$) and substitute it for $I_3$
into the first equality in~(\ref{conds:(XY)(qS)}). The resulting
equality implies that the third condition (in
Theorem~\ref{thm:non-constant}) for the existence of factorization
of the type $(Y)(X)(S)$ is satisfied. Since the first two conditions
are satisfied obviously, we proved the theorem for this case.

Cases $S_1=X$, $S_2=Y$ and $S_1=Y$, $S_2=X$ and $S_1=X$, $S_2=S$ and
$S_1=S$, $S_2=X$ are obvious consequences of
Theorem~\ref{thm:non-constant}.

Consider the case, where the symbol of $L$ has exactly two different
factors, that is $L$ is in the normalized form~(\ref{op:XXY}). There
are only two cases to consider: $S_1=X, S_2=Y$, and $S_1=Y, S_2=X$.

Straightforward computations shows that the equality $H_1 \circ H_2
= G_2 \circ G_1$, where $\Sym(H_1)=X$ and $\Sym(G_1)=Y$ implies that
for some $a=a(x,y)$ and $b=b(x,y)$ we have $H_2 = D_{xy} + a D_x + b
D_y + a_x + ab=(D_x + b) \circ (D_y +a)$, while $G_1=D_y+a$. Thus,
$H_2$ has a factorization of the type $(X)(Y)$.

Similar computations shows that the equality $H_1 \circ H_2 = G_2
\circ G_1$, where $\Sym(H_1)=Y$ and $\Sym(G_1)=X$ implies that for
some $a=a(x,y)$ and $b=b(x,y)$ we have $G_2 = D_{xy} + a D_x + b D_y
+ b_y + ab=(D_y + a) \circ (D_x +b)$, while $H_1=D_y+a$. Thus, $H_2$
has a factorization of the type $(Y)(X)$.
\end{proof}

\begin{example}[Symbol $SXY$, $S_1=X+Y$, $S_2=Y$] We found an operator with two factorizations
$L=(D_x + D_y + x) \circ (D_{xy}+ yD_x +y^2 D_y + y^3)$ and
$L=(D_{xx}+ D_{xy} +(x+y^2)D_x + y^2 D_y + xy^2+2y) \circ (D_y +
y)$. Then $L$ has factorization $L=(D_x +D_y + x) \circ (D_x +y^2)
\circ (D_y +y)$.
\end{example}

\begin{example}[Symbol $X^2Y$, $S_1=X$, $S_2=Y$] We found an operator with two factorizations
$L=(D_x + x) \circ (D_{xy}+ yD_x +y^2 D_y + y^3)$ and $L=(D_{xx}+
(x+y^2)D_x + xy^2) \circ (D_y + y)$. Then $L$ has factorization
$L=(D_x + x) \circ (D_x +y^2) \circ (D_y +y)$.
\end{example}

\begin{example}[Symbol $X^2Y$, $S_1=Y$, $S_2=X$] We found an operator with two factorizations
$L=(D_y + x) \circ (D_{xx}+ yD_x +y^3-y^4)$ and $L=(D_{xy}+ xD_x
+y^2D_y +xy^2+2y) \circ (D_x + y-y^2)$. Then $L$ has factorization
$L=(D_y + x) \circ (D_x +y^2) \circ (D_x + y-y^2)$.
\end{example}

Looking at the examples, one can notice that the factorizations into
first-order factors have the right and left factors exactly the same
as they were in the initial, given factorizations. In fact, this
will be always the case. Accordingly, we improve
Theorem~\ref{thm:main_3} proving the following one.

\begin{theorem} \label{thm:improved1}
A third-order bivariate operator $L$ has a first-order left factor
$F_1$ and a first-order right factor $F_2$ with
$\gcd(\Sym(F_1),\Sym(F_2))=1$ if and only if $L$ has a factorization
into three factors, the left one of which is exactly $F_1$ and the
right one is exactly $F_2$.
\end{theorem}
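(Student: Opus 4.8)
The plan is to build on Theorem~\ref{thm:main_3}, which already guarantees a complete factorization $L=G_1\circ G_2\circ G_3$ of type $(S_1)(T)(S_2)$ whenever $L$ has a left factor $F_1$ with $\Sym(F_1)=S_1$ and a right factor $F_2$ with $\Sym(F_2)=S_2$, $\gcd(S_1,S_2)=1$. What remains is purely a matter of \emph{uniqueness of the outer factors}: one must show that in this triple factorization one may take $G_1=F_1$ and $G_3=F_2$. The key observation is that $G_1\circ(G_2\circ G_3)$ and $F_1\circ(F_2')$ are two factorizations of $L$ of the same type $(S_1)(T S_2)$ with $\gcd(S_1,TS_2)=1$ (since $S_1$ is a first-order symbol coprime to $S_2$, and coprimality with the product $TS_2$ over $K[X,Y]$ holds as long as $S_1\nmid T$; the normalized forms make this transparent in each case, and when $S_1\mid T$ one reshuffles — see below). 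So the \emph{uniqueness} half of Theorem~\ref{thm:gs} applies and forces $G_1=F_1$.

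Second, I would argue symmetrically for the right factor: $(G_1\circ G_2)\circ G_3$ and $(F_1'')\circ F_2$ are two factorizations of $L$ of type $(S_1 T)(S_2)$ with $\gcd(S_1T,S_2)=1$ (again using $\gcd(S_1,S_2)=1$ and $S_2\nmid T$), so Theorem~\ref{thm:gs} gives $G_3=F_2$. Combining the two, the triple factorization $L=F_1\circ G_2\circ F_2$ has exactly the prescribed left and right factors, which is the assertion. The ``$\Longleftarrow$'' direction is again trivial: a three-factor factorization with left factor $F_1$ and right factor $F_2$ exhibits $F_1$ as a left factor and $F_2$ as a right factor.

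The one point requiring care — and the main obstacle — is the coprimality bookkeeping: Theorem~\ref{thm:gs} needs $S_1$ coprime to $\Sym(G_2\circ G_3)=T\cdot S_2$, and $S_2$ coprime to $\Sym(G_1\circ G_2)=S_1\cdot T$. We are given $\gcd(S_1,S_2)=1$, but $T$ could share a factor with $S_1$ or $S_2$ (indeed in the forms~(\ref{op:XXY}) and~(\ref{op:XXX}) the symbol has repeated linear factors). I would handle this by going through the same finite case list as in the proof of Theorem~\ref{thm:main_3}: in the hyperbolic case~(\ref{L1}) with symbol $XYS$, whenever $S_1$ equals one of $X,Y$ and $T$ equals another, one simply groups the factors of the triple so that the repeated symbol is never split across the two pieces being compared — e.g.\ if $S_1=X$ and $T=X$ one instead compares $L=(F_1\circ G_2)\circ G_3$ with $(\cdot)\circ F_2$ first, pinning down the right factor, and then peels off $F_1$ from the left using that $\Sym(G_2\circ F_2)=X\cdot S_2$ is coprime to... no — rather, one peels off $G_3$ then re-runs Theorem~\ref{thm:main_3} on the second-order quotient. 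In the non-hyperbolic cases~(\ref{op:XXY}),~(\ref{op:XXX}) the explicit computations at the end of the proof of Theorem~\ref{thm:main_3} already exhibit the relevant second-order factor with $F_2$ (resp.\ $F_1$) literally as its right (resp.\ left) first-order factor, so in those cases the conclusion is immediate and no appeal to Theorem~\ref{thm:gs} is even needed. Thus the proof is a short wrap-up of Theorem~\ref{thm:main_3} plus a careful but routine invocation of the uniqueness theorem, with the bulk of the ``work'' being the verification that the coprimality hypothesis of Theorem~\ref{thm:gs} can always be arranged.
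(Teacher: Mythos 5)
Your proposal is correct and follows essentially the same route as the paper: Theorem~\ref{thm:main_3} supplies the triple factorization, the uniqueness half of Theorem~\ref{thm:gs} applied to the regroupings $(S_1)(TS_2)$ and $(S_1T)(S_2)$ pins down the outer factors, and the form~(\ref{op:XXY}) is disposed of by the explicit computations already contained in the proof of Theorem~\ref{thm:main_3}. The only remark is that your coprimality worry is vacuous in the hyperbolic case, since there $X$, $Y$ and $S=pX+qY$ are automatically pairwise coprime --- which is precisely the observation the paper relies on.
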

The following diagram is an informal illustration of the statement
of the theorem:
\[
\left( L=F_1 \circ \dots \quad \wedge \quad L=\dots \circ F_2
\right) \iff L = F_1 \circ \dots \circ F_2 \ .
\]
\begin{proof} Let $L$ have the normalized
form~(\ref{L1}). Then by Theorem~\ref{thm:main_3} if $L$ has a
first-order left factor $F_1$ and a first-order right factor $F_2$
($\Sym(F_2)$ is co-prime with $\Sym(F_1)$), it has a factorization
into first-order factors of the type $(S_1)(R)(S_2)$, where
$R=\Sym_L/(S_1 S_2)$. Theorem~\ref{thm:gs} implies that such
factorization is unique, so we have some unique first-order LPDOs
$T_1, T, T_2$ such that $L= T_1 \circ T \circ T_2$, where
$\Sym(T_1)=S_1$, $\Sym(T)=R$, $\Sym(T_2)=S_2$. This also means that
there are factorization $L= T_1 \circ (T \circ T_2)$ of the type
$(S_1)(R S_2)$ and factorization $L= (T_1 \circ T) \circ T_2$ of the
type $(S_1R)(S_2)$. Since $S_1, R, S_2$ are pairwise coprime, by
Theorem~\ref{thm:gs} such factorizations are unique. On the other
hand we have initial factorizations that are factorizations of the
same types. Thus, we have $F_1=T_1$ and $F_2=T_2$.

For $L$ that has the normalized form~(\ref{op:XXY}), the statement
of the theorem is actually a subresult in the proof of
Theorem~\ref{thm:main_3} for this case.
\end{proof}

\begin{proposition}
The condition $\gcd(S_1,S_2)=1$ in Theorems~\ref{thm:main_3}
and~\ref{thm:improved1} cannot be omitted.
\end{proposition}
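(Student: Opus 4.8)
The plan is to refute both biconditionals at once with a single explicit counterexample, namely the Landau operator $L$ recalled in the Introduction. From the two factorizations displayed there, $L$ has the first-order left factor $F_1 = D_x + 1 + \frac{1}{x+c(y)}$ and the first-order right factor $F_2 = D_x + 1$, with $\Sym(F_1) = \Sym(F_2) = X$, so $\gcd(\Sym(F_1),\Sym(F_2)) = X \neq 1$; moreover $\Sym(L) = X^2(X+xY)$, so that $T := \Sym(L)/(X\cdot X) = X+xY$. Since the ``$\Longleftarrow$'' parts of Theorems~\ref{thm:main_3} and~\ref{thm:improved1} never use coprimality, it suffices to prove two facts: (i) $L$ has no factorization into three first-order factors whose leftmost factor is $F_1$ and whose rightmost factor is $F_2$ (this shows Theorem~\ref{thm:improved1} fails once the hypothesis $\gcd=1$ is dropped); and (ii) $L$ has no factorization of type $(X)(X+xY)(X)$ (this shows Theorem~\ref{thm:main_3} fails once $\gcd=1$ is dropped, taking $S_1=S_2=X$).

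For (i) I would argue by right cancellation. Suppose $L = F_1 \circ B \circ F_2$; comparing symbols forces $\Sym(B) = X+xY$. Writing $M = D_{xx} + xD_{xy} + D_x + (2+x)D_y$ for the second-order factor occurring in the second factorization of $L$, we have $(F_1 \circ B)\circ F_2 = L = M \circ F_2$. Because the symbol of a nonzero operator is nonzero and symbols are multiplicative, right cancellation holds in $K[D]$, so $F_1 \circ B = M$. But this exhibits $M$ as a product of two first-order operators, contradicting the irreducibility of $M$ noted in the Introduction. Hence (i).

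For (ii) I would compute directly. Suppose $L = A \circ B \circ C$ with $\Sym(A)=X$, $\Sym(B)=X+xY$, $\Sym(C)=X$, and write $A = D_x + a$, $B = D_x + xD_y + b$, $C = D_x + \gamma$ for unknown functions $a,b,\gamma$ of $x,y$. Expanding the triple composition and matching its coefficients against the explicit expansion of $L = M\circ(D_x+1)$ yields: the symbol coefficients match automatically; the coefficients of $D_{xx}$ and $D_{xy}$ determine $a$ and $b$ in terms of $\gamma$ (one finds $b=-1/x$ and $a = 2 + 1/x - \gamma$); the coefficients of $D_x$ and of $D_y$ become two first-order PDEs for the single function $\gamma$; and the remaining constant-term equation turns out to be an automatic consequence of these two PDEs. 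Imposing the compatibility condition $\gamma_{xy}=\gamma_{yx}$ on the two PDEs collapses them — after simplification, with $w:=\gamma-1$, to the relation $2w(x+3)=0$ — forcing $\gamma\equiv 1$; but $\gamma\equiv 1$ violates one of the two PDEs (which would require $\gamma_y=1/x^2$). So no such $A,B,C$ exist, giving (ii).

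The trivial direction (each triple factorization obviously supplies the advertised left and right factors) needs no work. The hard part will be the bookkeeping in step (ii): expanding $A\circ B\circ C$ without error, then checking both that the constant-term equation is redundant and that the compatibility condition really does reduce to the simple relation $2w(x+3)=0$. Everything else — the symbol count and right-cancellation argument in (i), and the appeal to the irreducibility of $M$ — is immediate.
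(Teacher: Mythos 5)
Your proposal is correct, and the computational claims at its core check out: expanding $(D_x+a)\circ(D_x+xD_y+b)\circ(D_x+\gamma)$ against the Landau operator does force $a=2+1/x-\gamma$ and $b=-1/x$; the $D_y$- and $D_x$-coefficients then give $\gamma_x=f(x,\gamma)$ and $\gamma_y=g(x,\gamma)$ with right-hand sides free of $y$; the cross-derivative condition $f_\gamma g-g_\gamma f-g_x=0$ reduces to $2(\gamma-1)(x+3)/x^3=0$, i.e.\ your $2w(x+3)=0$; and $\gamma\equiv 1$ is then incompatible with the requirement $\gamma_y=1/x^2$. Two small remarks: step (i) is actually subsumed by step (ii), since any factorization $F_1\circ B\circ F_2$ would be of type $(X)(X+xY)(X)$; and the redundancy of the constant-term equation is irrelevant --- to prove non-existence you only need the subsystem from the remaining coefficients to be inconsistent. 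Compared with the paper: the paper uses the same non-hyperbolic counterexample (the Landau operator, written there as $Q\circ Q\circ P=R\circ Q$ with $Q=D_x+1$, $P=D_x+xD_y$), but merely \emph{asserts} that no factorization of type $(X)(S)(X)$ exists; your right-cancellation argument and the compatibility computation supply the missing verification, which is a gain in rigor (your step (i) does lean on the irreducibility of the second-order factor, a classical fact the paper itself invokes without proof). What the paper has that you do not is a second, hyperbolic counterexample, $A_3=D_{xxy}+D_{xyy}+(x-y)(D_x+D_y)=(D_{xy}+x-y)(D_x+D_y)=(D_x+D_y)(D_{xy}+x-y)$, verified through the invariant conditions of Theorem~\ref{thm:non-constant}; this shows the coprimality hypothesis is essential even for operators in the normalized form~(\ref{L1}), which is the branch where the invariant machinery of Theorems~\ref{thm:main_3} and~\ref{thm:improved1} actually operates. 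A single counterexample is logically sufficient for the proposition as stated, so your proof is complete, but be aware that the Landau operator only tests the non-hyperbolic case.
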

\begin{proof} Hyperbolic case. Consider an equivalence class of~(\ref{L1}) defined
by $q=1$, $I_1=I_2=I_5=0$, $I_3=I_4=x-y$ of the invariants from
Thereom~\ref{invariant_general}. Using
Theorem~\ref{thm:non-constant} one can verify that operators of the
class have factorizations of the types $(S)(XY)$ and $(XY)(S)$ only.

Such equivalence class is not empty. For example, operator $A_3 =
D_{xxy} + D_{xyy} + (x-y)(D_x+D_y)$ belongs to this equivalence
class. Only the following two factorizations exist for $A_3$: $A_3=
(D_{xy} + x-y) (D_x + D_y) = (D_x + D_y) (D_{xy} + x-y)$.

The non-hyperbolic case. Consider operator of Landau
\[
D_x^3+x D_x^2D_y+2 D_x^2+(2x+2)D_xD_y+D_x+(2+x)D_y \ ,
\]
which has two factorizations into different numbers of irreducible factors:
\[
L = Q \circ Q \circ P = R \circ Q \ ,
\]
for the operators $P = D_x +xD_y, \quad Q = D_x +1, \quad  R = D_{xx} +x D_{xy}
+D_x +(2+x)D_y$. That is factorizations of the types $ (X)(SX)$, $(SX)(X) $
exist, while those of the type $(X)(S)(X)$ do not. Here we denote $S=X+xY$.
\end{proof}

\begin{proposition}  The statement of Theorem~\ref{thm:improved1}
is not always true for a general
fourth-order hyperbolic operator.
\end{proposition}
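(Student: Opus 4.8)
The plan is to refute Theorem~\ref{thm:improved1} in the fourth-order hyperbolic setting by exhibiting a single explicit counterexample, since the statement is universally quantified over operators and a lemma-style proof would require establishing a negative claim about all possible three-factor decompositions of a specific $L$. So I would first fix a symbol that is a product of four pairwise distinct first-order factors, say $\Sym(L)=XY(X+Y)(X+2Y)$ or something similarly generic, and look for an operator $L$ of this symbol that admits a first-order left factor $F_1$ (with symbol $X$, say) and a first-order right factor $F_2$ (with symbol $Y$), but for which \emph{no} factorization $L=F_1\circ G\circ F_2$ with $G$ of order two exists. The natural source of such an example is the non-uniqueness phenomenon already flagged in the introduction: unlike the third-order case, a fourth-order operator can have two genuinely different factorizations whose middle parts are incompatible.

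Concretely, I would try to build $L$ so that it has two factorizations of the desired one-sided shapes coming from \emph{different} chains: e.g. $L=F_1\circ H$ where $H$ is an irreducible third-order operator with left symbol not equal to $\Sym(F_2)$, and separately $L=M\circ F_2$ where $M$ is irreducible of order three with right symbol not equal to $\Sym(F_1)$. If $L$ did factor as $F_1\circ G\circ F_2$, then by Theorem~\ref{thm:gs} (applied to the pairwise-coprime grouping $(\Sym F_1)(\Sym G)(\Sym F_2)$ and its regroupings) both one-sided factorizations would be forced to coincide with this triple chain, hence $H$ would have to factor as $G\circ F_2$ and $M$ as $F_1\circ G$; irreducibility of $H$ (or $M$) then gives the contradiction. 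Thus the construction reduces to: pick an irreducible third-order hyperbolic $H$ with a prescribed first-order left factor $F_1'$ such that $F_1'\circ H$ also has a first-order \emph{right} factor $F_2$ whose symbol differs from the right symbol of $H$ — the Landau-type example in the preceding proposition is exactly this kind of gadget, one order up, so I would look for a fourth-order analogue of Landau's operator, or simply compose a known irreducible hyperbolic third-order operator with a carefully chosen first-order operator on the left and verify by direct computation that the resulting fourth-order $L$ also splits off a coprime first-order factor on the right.

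The key steps, in order, are: (1) choose the symbol and the target one-sided factors $F_1,F_2$; (2) write down a candidate $L$ as $F_1\circ H$ with $H$ irreducible of order three, choosing coefficients so that $L$ additionally has the right factor $F_2$ — this is a finite linear-algebra/ODE computation in the unknown coefficients; (3) verify irreducibility of the relevant third-order blocks, using the invariant criteria of Theorem~\ref{thm:non-constant} (or its $(\ref{op:XXY})$/$(\ref{op:XXX})$ analogues) to check that none of the required factorization conditions hold; (4) conclude via Theorem~\ref{thm:gs} that a hypothetical $F_1\circ G\circ F_2$ would force a factorization of the irreducible block, a contradiction. The main obstacle is step~(2) together with step~(3): one must simultaneously arrange that $L$ has \emph{both} one-sided factors and that the third-order pieces are genuinely irreducible, and these pull in opposite directions — making $L$ factor more makes irreducibility harder to maintain. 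In practice this is resolved by starting from a known irreducible hyperbolic operator of order three (such operators are well documented, e.g.\ via the Landau example's hyperbolic second-order factor lifted one order) and perturbing only the lower-order coefficients of $L$, which do not affect the symbol, until the right factor $F_2$ appears; the resulting $L$ is then exhibited explicitly together with its two factorizations $L=F_1\circ H$ and $L=M\circ F_2$, and the non-existence of the triple chain follows as above.
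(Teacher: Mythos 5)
Your strategy is exactly the paper's: the published proof exhibits a single explicit fourth-order hyperbolic operator $L=(D_x+D_y)\circ H=M\circ(D_y+x)$ in which the third-order block $H$ is irreducible, so that a hypothetical $L=F_1\circ G\circ F_2$ would force $H=G\circ F_2$ and contradict irreducibility. The substantive gap in your write-up is that for a proposition of this kind the explicit witness \emph{is} the proof, and you never produce one: steps (2) and (3) of your plan -- solving for coefficients so that $F_1\circ H$ acquires the coprime right factor $F_2$, and certifying irreducibility of $H$ via the criteria of Theorem~\ref{thm:non-constant} or its non-hyperbolic analogues -- are precisely where all the work lies, and you only assert that they can be carried out. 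As it stands the argument establishes ``if such an $L$ exists, it is a counterexample,'' not that one exists.

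Two smaller points. First, your appeal to Theorem~\ref{thm:gs} for the cancellation step is unnecessary and slightly misdirected: since $K[D]$ has no zero divisors, $F_1\circ H=F_1\circ(G\circ F_2)$ already gives $H=G\circ F_2$ by left cancellation, with no coprimality hypothesis. This matters because the paper's actual example has symbol $XY(X+Y)^2$, where $\Sym(F_1)=X+Y$ and $\Sym(H)=XY(X+Y)$ are \emph{not} coprime, so Theorem~\ref{thm:gs} would not apply there; your insistence on four pairwise distinct symbol factors is an avoidable restriction. Second, only one of the two blocks needs to be irreducible (the one adjacent to the factor you cancel), so you need not arrange irreducibility of both $H$ and $M$ as your step (4) suggests.
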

\begin{proof} For example, operator
\begin{eqnarray*}
&& L =(D_x + D_y)\circ ( D_x D_y (D_x + D_y) + x D_{xx} + (2- x^2)
D_x + x D_y - 2x +
  x^2) \\
  && = (D_x (D_x + D_y)^2  - x D_{x}(D_x + D_y) + (x-2)D_x + (x-1)D_y +1) \circ (D_y +
  x) \ .
\end{eqnarray*}
The second factor in the first factorization has no factorization.
\end{proof}

\section{Completely Reducible Operators}
\label{sec:completely_reducible}

Let $<L>$ denote the left ideal generated by an operator $L \in
K[D]$. Consider Linear Ordinary Differential Operators (LODOs). The
ring of LODOs are the principal ideal domain and, therefore, the
intersection of two principal ideals is again principle.
Consequently, the least common multiple ($\lcm$) of two LODOs $L_1$
and $L_2$ can be defined uniquely as $L$ such that $<L> = <L_1> \cap
<L_2>$. Since in the ring of LPDOs this is not the case, it was
suggested\cite{gs} to introduce the notion of a completely
irreducible LPDO.

\begin{definition} \cite{gs} An LPDO $L$ is said to be completely irreducible, if  it can be
expressed as $<L> = <L_1> \cap \dots \cap <L_k>$
  for suitable irreducible LPDOs $L_1,
\dots , L_k$. In this case $ L = \lcm \{ L_1, \dots, L_k \} $ by definition.
\end{definition}

\begin{theorem} \cite{gs}  \label{thm:gs_inters_ideals}
If an LPDO $L$ has right factors $L_1, \dots, L_k$ and
\begin{equation} \label{cond:sym=lcm}
\Sym_L = \lcm(\Sym_{L_1}, \dots, \Sym_{L_k}) \ ,
\end{equation}
then $<L> = <L_1> \cap \dots \cap <L_k>$. If the factors $L_1, \dots, L_k$ are
irreducible, then $L$ is completely reducible via $L_1, \dots, L_k$.
\end{theorem}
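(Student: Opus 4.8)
The plan is to prove the asserted equality of left ideals by establishing the two inclusions separately, and then to read the last sentence off directly from the definition. The inclusion $\langle L\rangle\subseteq\langle L_1\rangle\cap\dots\cap\langle L_k\rangle$ is the easy direction: since each $L_i$ is a right factor of $L$ we may write $L=M_i\circ L_i$ with $M_i\in K[D]$, hence $L\in\langle L_i\rangle$, and because $\langle L_i\rangle$ is a left ideal this yields $\langle L\rangle\subseteq\langle L_i\rangle$ for every $i$; intersecting over $i$ gives the claim. Note that this direction does not use the symbol hypothesis.

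For the reverse inclusion I would run a pseudo-division (reduction) argument, whose termination is exactly what the hypothesis $\Sym(L)=\lcm(\Sym(L_1),\dots,\Sym(L_k))$ provides. The starting point is the remark that every nonzero $P\in\bigcap_i\langle L_i\rangle$ satisfies $\Sym(L)\mid\Sym(P)$ in $K[X,Y]$: writing $P=A_i\circ L_i$ gives $\Sym(P)=\Sym(A_i)\cdot\Sym(L_i)$, so $\Sym(L_i)\mid\Sym(P)$ for all $i$; since $K[X,Y]$ is a UFD and the symbols are homogeneous, their $\lcm$ divides $\Sym(P)$, and that $\lcm$ is $\Sym(L)$ by assumption. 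In particular $\ord(P)=\deg\Sym(P)\ge\deg\Sym(L)=\ord(L)$, so $\bigcap_i\langle L_i\rangle$ contains no nonzero operator of order below $\ord(L)$. Now take any $P\in\bigcap_i\langle L_i\rangle$: if $P=0$ we are done, and otherwise we write $\Sym(P)=H\cdot\Sym(L)$ with $H$ homogeneous, pick $\widehat H\in K[D]$ with $\Sym(\widehat H)=H$ (say $\widehat H=H(D_x,D_y)$), and set $P':=P-\widehat H\circ L$. The top-degree parts cancel, so $\ord(P')<\ord(P)$, and $P'\in\bigcap_i\langle L_i\rangle$ since $P\in\langle L_i\rangle$ and $\widehat H\circ L\in\langle L_i\rangle$ (using $L\in\langle L_i\rangle$ and that $\langle L_i\rangle$ is a left ideal). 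Iterating, the order strictly drops at each stage, so after finitely many steps we reach an element of $\bigcap_i\langle L_i\rangle$ of order strictly below $\ord(L)$, which by the bound above must be $0$. Unwinding the iteration exhibits $P$ as a left multiple of $L$, so $P\in\langle L\rangle$; hence $\bigcap_i\langle L_i\rangle\subseteq\langle L\rangle$, and with the first inclusion we obtain the stated equality.

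The closing assertion then needs no further work: when the $L_i$ are irreducible, the relation $\langle L\rangle=\langle L_1\rangle\cap\dots\cap\langle L_k\rangle$ just proved is, word for word, the definition of $L$ being completely reducible via $L_1,\dots,L_k$, whence $L=\lcm\{L_1,\dots,L_k\}$ by definition. The step I expect to be most delicate is the reduction itself: $K[D]$ is not a principal ideal ring, so one cannot simply invoke the one-variable theory of the least common left multiple, and one must check carefully that each $P'$ stays inside $\bigcap_i\langle L_i\rangle$ and that the process bottoms out at $0$ rather than at a nonzero low-order remainder — and it is precisely at this last point that the hypothesis $\Sym(L)=\lcm(\Sym(L_1),\dots,\Sym(L_k))$ is used, the argument failing without it.
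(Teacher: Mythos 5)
Your proof is correct, but there is nothing in the paper to measure it against: Theorem~\ref{thm:gs_inters_ideals} is stated as an imported result, cited from Grigoriev and Schwarz~\cite{gs}, and the paper gives no proof of it. What you have supplied is a sound, self-contained argument, and it is essentially the standard one. The easy inclusion $\langle L\rangle\subseteq\bigcap_i\langle L_i\rangle$ is exactly as you say and indeed needs no hypothesis on the symbols. For the reverse inclusion, the two pillars of your reduction are both solid: multiplicativity of the symbol ($\Sym(A\circ B)=\Sym(A)\cdot\Sym(B)$, stated in the paper's Remark in Sec.~\ref{sec:def}) gives $\Sym(L_i)\mid\Sym(P)$ for every nonzero $P$ in the intersection, and since $K[X,Y]$ is a UFD the least common multiple of the $\Sym(L_i)$ --- which is $\Sym(L)$ by hypothesis --- divides $\Sym(P)$; this yields the lower bound $\ord(P)\geq\ord(L)$ for nonzero elements of the intersection, which is precisely what forces your order-decreasing reduction $P\mapsto P-\widehat H\circ L$ to bottom out at $0$ rather than at a nonzero remainder, and unwinding exhibits $P$ as a left multiple of $L$. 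You are also right to flag that the membership $P'\in\bigcap_i\langle L_i\rangle$ at each step is the point that must be checked, and your justification (left ideals are closed under subtraction and under left multiplication, and $L\in\langle L_i\rangle$) is complete. The final sentence of the theorem is, as you note, immediate from the definition of complete reducibility. In short: the proposal fills a gap the paper deliberately leaves to the literature, and does so correctly.
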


An additional piece of motivation is~\cite{gs} the following. Let
for an ideal $I \subset K[D]$ denote by $V_I \subset K$ its space of
solutions. Then for two ideals $I_1, I_2 \subset K[D]$ we have
\cite{Cassidy:72,Sit:74} $ V_{I_1 \cap I_2} = V_{I_1} + V_{I_2} $,
which allows to reduce the solution problem of the partial
differential equation corresponding to a completely reducible LPDO
to ones of corresponding to its factors.

Notice that the properties of the existence of a right factor with
certain symbol or a factorization of certain factorization type,
and, therefore, irreducibility of factors are invariant under the
gauge transformations. Consequently, an invariant description of the
completely reducible operators is possible.


 Consider a hyperbolic linear partial differential operator of third order in
the normalized form~(\ref{L1}). Consider all possible right factors
of the operator, their symbols are
\[
X \ ,  Y \ ,  S \ ,   XY \ ,  XS \ , YS \ ,
\]
where $S=X+qY$. Let us list all the possibilities for $\{ L_1,
\dots, L_k \}$ that $L_i$-s together satisfy~(\ref{cond:sym=lcm}).
Notice that the number (of factors) $k$ is not fixed. However, by to
Thereom~\ref{thm:gs} there is no more than one factorization of each
factorization type. Thus, $L_i \neq L_j$  for $i \neq j$, and,
therefore, $k \leq 6$.
\begin{enumerate}
\item[I.]
$\{ X, Y, S \}$ \ ;
\item[II.]
 $\{ SX, SY \}$ \ , $\{ SX, XY \}$ \ , $\{ SY, XY \}$ \ ;
\item[III.]
 $\{ X, SY \}$ \ , $\{ Y, SX \}$ \ , $\{ S, XY \}$ -- the right factors are co-prime \ ;
\item[IV.] $\{ X, Y, SX \}$ \ , $\{ X, XY ,SY \} \ , \dots$  -- the sets that
contain as the subset one of the sets of the groups $III$ and $I$.
\item[V.] $\{SX, SY, XY \}$ -- the only set that
contain as the subset one of the sets of the groups $II$ and does
not belong to the group $IV$.
\end{enumerate}

Theorem~\ref{thm:main_3} allows us to avoid the consideration of the
large group $IV$. Indeed, By Thereom~\ref{thm:main_3}, at least one
of the second-order factors of the sets fail to be irreducible.

Now, when we rule out all cases but eight, using
Theorem~\ref{thm:non-constant} it is easy to obtain sufficient
conditions for LPDOs to be completely reducible with
\[
 <L> = <L_1> \cap  \dots \cap <L_k> \ ,
\]
where $\{ \Sym(L_1), \dots ,  \Sym(L_k) \}$ belong to $I$ or $II$ or
$III$  or $V$. We just combine certain conditions from
Theorem~\ref{thm:non-constant}. Further below we use notation
$\overline{W}$ for an arbitrary operator with the principal symbol
$W$.

It is of interest to consider instead an important particular case
$q=1$. We collect the sufficient conditions for this case in the
following theorem.

\begin{theorem} Given an equivalence class of (\ref{L1}) by
$q=1$, and values $I_1, \dots , I_5$ of the invariants from
Theorem~\ref{invariant_general}. Operators of the class are
completely reducible with

$I$.  $<L> = <\overline{X}> \cap  <\overline{Y}> \cap
<\overline{S}>$ if
\begin{eqnarray*}
 && (D_y +I_1) \circ (2 D_x + D_y) (I_1) = 0 \ , \\
 && I_2 = I_{1x}-I_3 \ , \\
&& I_3 = (I_{1y}+2I_{1x})/3 \ , \\
&& I_4 = -I_2+I_3 \ , \\
&& I_5 = I_1I_{1x}-2I_1I_3-I_{1xy}/2 \ ;
\end{eqnarray*}

$II$.  $<L> = <\overline{SX}> \cap  <\overline{SY}>$ if
\begin{eqnarray*}
 && I_2 = F_1(y-x) \ , \\
&& I_3 = I_4 = 0\ , \\
&& I_5 = -1/2I_{1xy}+I_{2y}\ ,
\end{eqnarray*}
where $F_1(y-x)$ is some function;

 $<L> = <\overline{SX}> \cap  <\overline{XY}>$ if
\begin{eqnarray*}
 && I_{1xy}+I_{2x}-I_1I_{1y}-2I_1I_2 = 0 \ , \\
 && I_3 = 0 \ , \\
 && I_4 =  -I_{1y}+I_{1x}-3I_2\ , \\
 && I_5 = I_{4y}-I_{1xy}/2+I_{2y} \ ;
\end{eqnarray*}

$<L> = <\overline{SY}> \cap  <\overline{XY}>$ if
\begin{eqnarray*}
 && I_{1xy}-I_1I_{1x}-I_{2y}+I_1I_2= 0 \ , \\
 && I_3 = I_{1y}-I_{1x}+3I_2 \ , \\
 && I_4 = 0 \ , \\
 && I_5 =  I_{3x}-1/2I_{1xy}-I_{2x} \ ;
\end{eqnarray*}

$III$.  $<L> = <\overline{X}> \cap  <\overline{SY}>$ if
\begin{eqnarray*}
 && -I_{3x}+(I_{1x}I_1+I_{1xy}+I_{1xx})/2 = 0\ , \\
 && I_2 = I_{1x}/2 \ , \\
&& I_4 = 0\ , \\
&& I_5 = -1/2I_{1xy}+I_2I_1+I_{2y} \ ;
\end{eqnarray*}

$<L> = <\overline{Y}> \cap  <\overline{SX}>$ if
\begin{eqnarray*}
 && -I_{4y}-I_{1y}I_1/2+I_{1xy}/2+I_{1yy}/2= 0\ , \\
 && I_2 = -I_{1y}/2 \ , \\
&& I_3 = 0\ , \\
&& I_5 = I_2I_1-I_{1xy}/2-I_{2x} \ ;
\end{eqnarray*}

$<L> = <\overline{S}> \cap  <\overline{XY}>$ if
\begin{eqnarray*}
 && -I_{3y}+(I_{1xy}+I_{1xx}-I_1I_{1y}-I_1I_{1x})/2-I_{3x}= 0\ , \\
 && I_2 = (I_{1x} -I_{1y})/2 \ , \\
&& I_4 = -I_2+I_3\ , \\
&& I_5 = ( I_1I_{1x}-I_1I_{1y}-I_{1xy})/2-I_1I_3 \ ;
\end{eqnarray*}

$V$. $<L> = <\overline{SX}> \cap  <\overline{SY}> \cap
<\overline{XY}>$ if
\begin{eqnarray*}
 && I_{1xx}-I_{1yy}= 0 \ , \\
 && I_{1xx}-2I_1I_{1x}+2I_{1xy}-I_1I_{1y} = 0 \
 , \\
 &&  I_2 = (I_{1x}-I_{1y})/3 \ ,  \\
 &&  I_3=I_4 = 0 \ , \\
 &&  I_5 = -I_{1xy}/2+I_{2y} \ .
\end{eqnarray*}
\end{theorem}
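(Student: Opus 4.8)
The plan is to treat this final theorem as a purely mechanical corollary of Theorem~\ref{thm:non-constant}, specialized to $q=1$. The strategy for each of the eight displayed cases is the same: the set $\{\Sym(L_1),\dots,\Sym(L_k)\}$ listed in groups $I$, $II$, $III$, $V$ satisfies $\lcm(\Sym_{L_1},\dots,\Sym_{L_k})=\Sym_L=XYS$, so by Theorem~\ref{thm:gs_inters_ideals} it suffices to exhibit simultaneous right factors of $L$ with those symbols. By Lemma~\ref{fact_is_invariant} and the discussion preceding the theorem, "$L$ has a right factor with symbol $W$" is an invariant property, so each such requirement is equivalent to a system of polynomial equations in $q,I_1,\dots,I_5$ read off from Theorem~\ref{thm:non-constant}; setting $q=1$ (hence $q_x=q_y=q_{xy}=q_{xx}=\dots=0$) in those systems and taking the union over the factors in the given group produces exactly the displayed conditions. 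Finally one must check that these factors are irreducible so that "completely reducible" is justified: a first-order factor is automatically irreducible, and a second-order factor $\overline{XS}$, $\overline{YS}$, or $\overline{XY}$ is irreducible precisely because, by Theorem~\ref{thm:main_3} (or rather its contrapositive, used already to discard group $IV$), if it factored further $L$ would lie in a smaller equivalence class excluded by the stated equations; but in fact for the purposes of this theorem it is enough that the stated conditions do not simultaneously force the extra factorization, which is visible from the defining equations of each case.

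Concretely, I would proceed group by group. For group $I$, the set $\{X,Y,S\}$ needs $L$ to have simultaneously a right factor with symbol $X$, one with symbol $Y$, and one with symbol $S$; pulling the relevant "$(\cdot)(X)$", "$(\cdot)(Y)$", "$(\cdot)(S)$" conditions from Theorem~\ref{thm:non-constant} — these are encoded via the three-factor types $(S)(Y)(X)$, $(Y)(S)(X)$, etc., together with the two-factor types ending in $X$, $Y$, $S$ — setting $q=1$, and Gr\"obner-reducing, one gets the five equations listed (one operator equation $(D_y+I_1)\circ(2D_x+D_y)(I_1)=0$, then $I_2,I_3,I_4,I_5$ expressed in terms of $I_1$ and its derivatives). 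For group $II$, the three two-element sets $\{SX,SY\}$, $\{SX,XY\}$, $\{SY,XY\}$: here one intersects the conditions~(\ref{conds:(XqS)(Y)}) with~(\ref{conds:(YqS)(X)}), or~(\ref{conds:(XqS)(Y)}) with~(\ref{conds:(XY)(qS)}), or~(\ref{conds:(YqS)(X)}) with~(\ref{conds:(XY)(qS)}) respectively, again specializing $q=1$. For group $III$, the co-prime cases $\{X,SY\}$, $\{Y,SX\}$, $\{S,XY\}$: intersect the relevant pair among~(\ref{conds:(X)(qSY)}), (\ref{conds:(Y)(XqS)}), (\ref{conds:(XY)(qS)}), and the simple "factor with symbol $X$" / "factor with symbol $Y$" / "factor with symbol $S$" conditions. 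For group $V$, $\{SX,SY,XY\}$: intersect all three of~(\ref{conds:(XqS)(Y)}), (\ref{conds:(YqS)(X)}), (\ref{conds:(XY)(qS)}) at $q=1$. In every case the bookkeeping is: write down the union of the polynomial systems, substitute $q=1$, eliminate, and confirm the output matches the displayed formulae.

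The main obstacle is entirely computational rather than conceptual: the conditions in Theorem~\ref{thm:non-constant} are large differential-polynomial expressions, and one must be careful that the "and" of two factorization conditions, after imposing $q=1$, genuinely collapses to the compact system stated — i.e. that there are no hidden integrability constraints lost or spuriously introduced when solving for $I_2,\dots,I_5$ in terms of $I_1$. I would verify each case with a computer algebra system (reducing modulo the differential ideal generated by $q-1$ and cross-checking that back-substitution into all the relevant equations of Theorem~\ref{thm:non-constant} yields identities). A secondary point worth a sentence in the write-up is \emph{sufficiency only}: the theorem claims these are sufficient conditions, so I do not need to prove the systems are also necessary for complete reducibility via that particular set — which is fortunate, since necessity would additionally require ruling out, for each class, \emph{every} other decomposition with the same symbol lcm. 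Thus the proof reduces to: (i) cite Theorem~\ref{thm:gs_inters_ideals} to convert "complete reducibility via $\{L_i\}$" into "simultaneous right factors with the given symbols"; (ii) cite Theorem~\ref{thm:non-constant} to convert each right-factor requirement into polynomial conditions; (iii) set $q=1$ and simplify; (iv) invoke Theorem~\ref{thm:main_3} only to note that group $IV$ is already excluded so the eight listed cases are exhaustive among genuinely new decompositions. I would present (iii) as "a direct computation" and relegate the explicit elimination to the reader or an appendix.
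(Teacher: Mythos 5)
Your proposal is correct and follows essentially the same route as the paper: the paper's own justification is precisely to invoke Theorem~\ref{thm:gs_inters_ideals} to reduce complete reducibility to the simultaneous existence of right factors with the listed symbols, to read off each right-factor requirement as the corresponding condition pair from Theorem~\ref{thm:non-constant} (the two-factor types ending in $(X)$, $(Y)$, $(S)$, $(XY)$, $(SX)$, $(SY)$), and to combine and simplify these at $q=1$, with group $IV$ discarded via Theorem~\ref{thm:main_3}. Your added remarks on irreducibility of the second-order cofactors and on sufficiency-only are consistent with (and slightly more careful than) the paper's treatment.
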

\section{Conclusions}

The paper is devoted to the case when one LPDO has several
factorizations.

In Sec.~\ref{sec:main} we proved that a third-order bivariate
operator $L$ has a first-order left factor $F_1$ and a first-order
right factor $F_2$ with $\gcd(\Sym(F_1),\Sym(F_2))=1$ if and only if
$L$ has a factorization into three factors, the left one of which is
exactly $F_1$ and the right one is exactly $F_2$. Also it was shown
that the condition $\gcd(\Sym(F_1),\Sym(F_2))=1$ is essential, and
that the analogous statement ``as it is'' is not true for LPDOs of
order four. However, other generalizations may be possible.

The proof for the hyperbolic case was done using invariants'
methods. This is a nice and easy way to prove the things since the
expressions for the necessary and sufficient conditions of the
existence of factorizations of a given type are already known. It
was the form of the conditions, that allowed us to make initial
hypothesis that were proved later to be true in Sec.~\ref{sec:main}.
However, some other method is required for generalizations to higher
order LPDOs.

In Sec.~\ref{sec:completely_reducible} we considered the case, where
one LPDO has two or more several factorizations of certain types.
Most of the cases were ruled out from the consideration due to the
results of Sec.~\ref{sec:main}. The explicit formulae for the
sufficient conditions for the complete reducibility of an LPDO were
found for the case $p=1$ (which is the case where the symbol of $L$
has constant coefficients only).

\subsubsection*{Acknowledgments.}
The author was supported by the Austrian Science Fund (FWF) under
project DIFFOP, Nr. P20336-N18.


\end{document}